\newtheorem{thm}{Theorem}[section] 
\newtheorem{cor}[thm]{Corollary}
\newtheorem{lem}[thm]{Lemma}
\newtheorem{prop}[thm]{Proposition}
\newcommand\operA[2]{{\if!#2!\operatorname{#1}\else{\operatorname{#1}_{#2}^{\phantom{I}}}\fi}} 
\newcommand\Cref[1]{{Corollary~\ref{#1}}}%
\newcommand{\Trace}[1][]{\if!#1!\operatorname{Tr}\else{\operatorname{Tr}_{#1}^{\phantom{I}}}\fi} 
\long\def\forget#1\forgotten{{}} %
\def\({\left(}
\def\){\right)}
\newif\iffurther
\newif\ifXY 
\journal{Archiv der Mathematik}
\begin{document}

\begin{frontmatter}

\title{Square-Central and Artin-Schreier Elements in Division Algebras}

\author[1]{Demba Barry}
\ead{barry.demba@gmail.com}

\author[2]{Adam Chapman}
\ead{adam1chapman@yahoo.com}

\address[1]{D\'{e}partement de Math\'{e}matique et Informatique , Universit\'{e} de Bamako, Colline de Badalabougou BP E3206, Bamako, Mali}

\address[2]{Department of Mathematics, Michigan State University, East Lansing, MI 48824}

\begin{abstract}
We study the behavior of square-central elements and Artin-Schreier elements in division algebras of exponent 2 and degree a power of 2.
We provide chain lemmas for such elements in division algebras over 2-fields $F$ of cohomological $2$-dimension $\operatorname{cd}_2(F) \leq 2$, and deduce a common slot lemma for tensor products of quaternion algebras over such fields.
We also extend to characteristic 2 a theorem proven by Merkurjev for characteristic not 2 on the decomposition of any central simple algebra of exponent 2 and degree a power of 2 over a field $F$ with $\operatorname{cd}_2(F) \leq 2$ as a tensor product of quaternion algebras.
\end{abstract}

\begin{keyword}
Quaternion algebras, Quadratic forms, Common slot lemma
\MSC[2010] primary 16K20; secondary 11E04, 11R52
\end{keyword}

\end{frontmatter}

\section{Introduction}

A quaternion algebra over a field $F$ is an algebra of the form
$$[\alpha,\beta)=F[x,y : x^2+x=\alpha, y^2=\beta, y x y^{-1}=x+1]$$
for some $\alpha \in F$ and $\beta \in F^\times$ if $\operatorname{char}(F)=2$, and
$$(\alpha,\beta)=F[x,y : x^2=\alpha, y^2=\beta, y x y^{-1}=-x]$$
for some $\alpha,\beta \in F^\times$ if $\operatorname{char}(F) \neq 2$.

The common slot lemma for quaternion algebras states that for every two isomorphic quaternion algebras $(\alpha,\beta)$ and $(\alpha',\beta')$ there exists $\beta'' \in F^\times$ such that $(\alpha,\beta) \cong (\alpha,\beta'') \cong (\alpha',\beta'') \cong (\alpha',\beta')$ (see \cite[Lemma 5.6.45 \& Corollary 5.6.48]{Jacobson1996}).

A noncentral element $v$ in a quaternion $F$-algebra is square-central if $v^2 \in F^\times$.
If $\operatorname{char}(F)=2$, a noncentral element $v$ is Artin-Schreier if $v^2+v \in F$. For every two Artin-Schreier elements $v$ and $w$ in a quaternion algebra there exists a square-central element $t$ such that $v t+t v=t w+w t=t$. If $\operatorname{char}(F)\neq 2$, the square-central elements are the ``pure quaternions". For every two square-central elements $v$ and $w$ in a quaternion algebra there exists another square-central element $t$ such that $v t=-t v$ and $t w=-w t$.
The common slot lemma is an immediate result of this fact, which we refer to as the chain lemma for square-central or Artin-Schreier elements.

We extend the notions of square-central and Artin-Schreier elements and their chain lemmas to any algebra of degree a power of 2 and exponent 2.
The notion of common slot lemma extends naturally to any tensor product of quaternion algebras.
In characteristic not 2, a common slot lemma and a chain lemma for square-central elements in tensor products of two quaternion algebras were provided in \cite{Siv} and \cite{ChapVish2}, respectively.
Equivalent results in characteristic 2 were provided in \cite[Section 3.3]{ChapmanPHD}.
In \cite{Chapman2015} a common slot lemma was provided for tensor products of quaternion algebras over fields $F$ with $\operatorname{cd}_2(F) \leq 2$.
However, a chain lemma for the square-central elements was not provided, and the length of the chain was not computed.

In this paper we prove several facts about square-central and Artin-Schreier elements, and in  particular we provide a chain lemma for them in division algebras of exponent 2 over 2-fields $F$ with $\operatorname{cd}_2(F) \leq 2$.
We then deduce the common slot lemma for tensor products of quaternion algebras over such fields and bound the length of the chains from above by 3, which is equal to the maximal length of such chains for biquaternion algebras over arbitrary fields. We make use of \citep[Theorem 3]{Kahn} on the decomposition of any central simple algebra of exponent 2 and degree a power of 2 over a field $F$ with $\operatorname{cd}_2(F) \leq 2$ as a tensor product of quaternion algebras, which was proven in that paper for characteristic not 2 and we prove it here also for characteristic 2.

\section{Preliminaries}\label{Secexact}

Quadratic forms play a major role in the study of central simple algebras of exponent 2.
By $I_q F$ we denote the group of Witt equivalence classes of even dimensional nonsingular quadratic forms over the field $F$.
Every such form is isometric to $[a_1,b_1] \perp \dots \perp [a_m,b_m]$ for some integer $m$ and $a_1,b_1,\dots,a_m,b_m \in F$ if $\operatorname{char}(F)=2$, and $\langle a_1,\dots,a_{2 m} \rangle$ for some $a_1,\dots,a_{2 m} \in F^\times$ if $\operatorname{char}(F)\neq 2$. The expression $[a,b]$ stands for the quadratic form $a u^2+u v+b v^2$, $\langle a_1,\dots,a_n \rangle$ stands for the diagonal form $a_1 u_1^2+\dots+a_n u_n^2$, and $\perp$ is the orthogonal sum of forms. 

The discriminant (also known as the Arf invariant in characteristic 2) is denoted by $\delta$ and defined as follows:
If $\operatorname{char}(F)=2$, $\delta$ maps $I_q F$ to the additive group $F/\{a^2+a : a \in F\}$ by $$\delta([a_1,b_1] \perp \dots \perp [a_m,b_m])=a_1 b_1+\dots+a_m b_m.$$
If $\operatorname{char}(F) \neq 2$, $\delta$ maps $I_q F$ to the multiplicative group $F^\times/(F^\times)^2$ by $$\delta(\langle a_1,\dots,a_{2 m}\rangle)=(-1)^m a_1 \dots a_{2 m}$$
(see \cite[Section 13]{EKM}).

The subgroup of $I_q F$ of forms with trivial discriminant is denoted by $I_q^2 F$.
We follow the traditional abuse of notation of writing $f \in I_q^2 F$ when we actually mean that the Witt equivalence class of $f$ belongs to $I_q^2 F$.

The Clifford algebra of a quadratic form $f$ is defined to be
\begin{eqnarray*}
C(f)=F[x_1,\dots,x_{2 m} : (u_1 x_1+\dots+u_{2 m} x_{2 m})^2=\\=f(u_1,\dots,u_{2 m})\quad \forall u_1,\dots,u_{2 m} \in F].
\end{eqnarray*}
For $f \in I_q F$ with $\dim(f)=2 m$, this algebra is a central simple algebra of exponent 2 and degree $2^m$. The Clifford invariant $c(f)$ of $f$ is the Brauer class of $C(f)$ in $Br_2(F)$. Restricted to $I_q^2 F$, the Clifford invariant is an epimorphism from $I_q^2 F$ to $Br_2(F)$ with kernel $I_q^3 F$ (see \cite{Merkurjev1981} for $\operatorname{char}(F) \neq 2$ and \cite{Sah1972} for $\operatorname{char}(F)=2$). For $f \in I_q^2 F$ one has $C(f) \cong M_2(E(f))$ with $E(f)$ a central simple algebra over $F$ of degree $2^{m-1}$ isomorphic to a tensor product of quaternion algebras.

The cohomological 2-dimension of $F$, denoted by $\operatorname{cd}_2(F)$, is by definition $\leq 2$ if $H^{n+1,n}(L,\mathbb{Z}/2 \mathbb{Z})=0$ for every finite field extension $L/F$ and $n \geq 2$ (see \cite[Section 101.B]{EKM}). The latter holds if and only if $I_q^3 L=0$ (see \cite[Fact 16.2]{EKM}). If $\operatorname{cd}_2(F) \leq 2$ then $I_q^2 F \cong Br_2(F)$.

A field extension $K/F$ is excellent if for every form $f$ over $F$ there exists a quadratic form $f'$ over $F$ such that $f'_K$ is isometric to the anisotropic part of $f_K$. In particular every quadratic field extension is excellent (see \cite[Example 29.2]{EKM} and \cite[Lemma 1]{MammoneMoresi}).
A 2-field is a field with no nontrivial odd degree extensions. (In \cite{EKM} such fields are called 2-special.)
In \cite[Theorem 4]{Merkurjev1991} interesting examples of fields $F$ with $\operatorname{cd}_2(F)=2$ were constructed. The odd closure $F'$ of such $F$ also has $\operatorname{cd}_2(F')=2$ (see \cite[Example 101.17]{EKM}).

The following lemma will be used later on:
\begin{lem}\label{exact}
For any field extension $K/F$, if $f \in I_q F$ and $f_K \in I_q^2 K$ then there exists $f' \in I_q^2 F$ such that $f'_K \simeq f_K$.
\end{lem}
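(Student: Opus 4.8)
The plan is to exploit the description of $I_q^2 F$ as the forms of trivial discriminant, together with the naturality of $\delta$ under scalar extension, and then to correct $f$ by a single binary form that becomes hyperbolic over $K$. First I would record that the discriminant is a homomorphism from $(I_q F,\perp)$ with kernel $I_q^2 F$, that its target is an elementary abelian $2$-group in both characteristics (namely $F^\times/(F^\times)^2$ if $\operatorname{char}(F)\neq 2$ and $F/\{a^2+a:a\in F\}$ if $\operatorname{char}(F)=2$), and that $\delta$ is natural: writing $f=\langle a_1,\dots,a_{2m}\rangle$ (resp. $f=[a_1,b_1]\perp\dots\perp[a_m,b_m]$), the class $\delta(f_K)$ is represented by the very same scalar $d:=(-1)^m a_1\cdots a_{2m}$ (resp. $d:=a_1b_1+\dots+a_mb_m$), now read modulo $(K^\times)^2$ (resp. modulo $\{a^2+a:a\in K\}$). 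The hypothesis $f_K\in I_q^2 K$ says exactly that this $d$ becomes trivial over $K$, i.e. $d=c^2$ for some $c\in K$ (resp. $d=e^2+e$ for some $e\in K$).

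Next I would build the correcting form. Set $g:=\langle 1,-d\rangle$ if $\operatorname{char}(F)\neq 2$ and $g:=[1,d]$ if $\operatorname{char}(F)=2$; in either case $g$ is a binary nonsingular form lying in $I_q F$ with $\delta(g)=\delta(f)$. The key elementary input is that a nonsingular binary form splits precisely when its discriminant is trivial: indeed $\langle 1,-c^2\rangle\cong\langle 1,-1\rangle$ is hyperbolic, while $[1,e^2+e]$ is isotropic (take $u=e$, $v=1$) hence hyperbolic. Thus $g_K$ is Witt-trivial. Now put $f':=f\perp g$. Writing the target multiplicatively, $\delta(f')=\delta(f)\,\delta(g)=\delta(f)^2$ is trivial since that group has exponent $2$ (in characteristic $2$ this reads $\delta(f)+\delta(g)=2\,\delta(f)=0$), so $f'\in I_q^2 F$. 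Finally $f'_K=f_K\perp g_K$ is Witt-equivalent to $f_K$ because $g_K$ is hyperbolic, that is $f'_K\simeq f_K$ in $I_q K$, which is the assertion.

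The computation is short, so the only real difficulty is organizational: keeping the two characteristics in parallel. The crux is the explicit choice of a binary corrector $g$ with prescribed discriminant that splits over $K$, and—especially in characteristic $2$—the two facts that the Arf target $F/\{a^2+a:a\in F\}$ has exponent $2$ and that a binary quadratic form is hyperbolic iff its Arf invariant lies in $\{a^2+a:a\in F\}$. I expect no field-theoretic input about $K/F$ (such as excellence) to be needed, since the argument uses only naturality of $\delta$ and the triviality of $d$ over $K$.
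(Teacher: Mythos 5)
Your argument is internally correct, but it proves a weaker statement than the lemma asserts. In this paper $\simeq$ denotes isometry, not Witt equivalence: your $f' = f \perp g$ satisfies $\dim f' = \dim f + 2$, so over $K$ you get $f'_K \simeq f_K \perp \varmathbb{H}$, which is Witt-equivalent to $f_K$ but never isometric to it. You flag this yourself by writing ``$f'_K \simeq f_K$ in $I_q K$,'' i.e.\ equality of Witt classes. The distinction is not pedantic here: the lemma is applied in the proof of Theorem \ref{decompose} to a form $g$ whose restriction $g_{F[x]}$ is the \emph{anisotropic part} of $f_{F[x]}$, and the replacement produced by the lemma must again restrict to that anisotropic part --- the very next line of that proof reads ``$E(g_{F[x]}) = C_A(F[x])$ and $g_{F[x]}$ is anisotropic.'' Your $f'$, whose restriction to $K$ contains a hyperbolic plane by construction, destroys exactly this property, and the subsequent decomposition step via \cite[Proposition 34.8]{EKM} (which needs anisotropy) would no longer apply. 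There is also no easy patch within your setup: one cannot in general split a hyperbolic plane off $f \perp \langle 1,-d\rangle$ over $F$, since that form need not be isotropic over $F$.

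The repair is to absorb the discriminant correction into $f$ itself instead of adjoining a new binary summand, which is what the paper does. In characteristic $\neq 2$ write $f \simeq \langle a_1, \dots, a_n\rangle$ and set $f' = \langle \delta^{-1}a_1, a_2, \dots, a_n\rangle$; since $\delta$ becomes a square $u^2$ in $K$, the scalar $\delta^{-1}$ is a square in $K$, so $f'_K \simeq f_K$ with no change of dimension, while the discriminant of $f'$ is trivial over $F$. In characteristic $2$ write $f \simeq [a_1,b_1]\perp\dots\perp[a_n,b_n]$ with $a_1 \neq 0$ and set $f' = [a_1, b_1 + a_1^{-1}\delta]\perp\dots\perp[a_n,b_n]$; the Arf invariant shifts by $\delta$, hence becomes trivial, and over $K$ the substitution $x \mapsto x + uy$ (where $\delta = u^2+u$ in $K$) gives an isometry $[a_1, b_1+a_1^{-1}\delta]_K \simeq [a_1,b_1]_K$. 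Your two key observations --- naturality of $\delta$ and the triviality of the correcting class over $K$ --- are exactly the right ingredients, and you are right that no excellence-type hypothesis on $K/F$ is needed; the ingredients just have to be implemented in place, keeping the dimension fixed, rather than by an orthogonal correction.
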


\begin{proof}
Let $\delta$ be a representative of the discriminant of $f$.

Assume $\operatorname{char}(F)=2$.
On the one hand $\delta \in F$. On the other hand, $\delta=u^2+u$ for some $u \in K$ because $f_K \in I_q^2 K$.
Now, $f \simeq [a_1,b_1] \perp \dots \perp [a_n,b_n]$ for some $a_1,b_1,\dots,a_n,b_n \in F$.
Without loss of generality we can assume $a_1 \neq 0$.
Set $f'=[a_1,b_1+a_1^{-1} \delta] \perp \dots \perp [a_n,b_n]$.
Because of the discriminant, $f' \in I_q^2 F$.

We shall now prove that $f'_K \simeq f_K$.
Since all the summands are the same except the first one, we shall prove that under the scalar extension, the first summands in both forms are isometric.
\begin{eqnarray*}
[a_1,b_1+a_1^{-1} \delta]_K=a_1 x^2+x y+(b_1+a_1^{-1} \delta) y^2 \simeq a_1 (x^2+x y+(a_1 b_1+\delta) y^2) \simeq\\
a_1 ((x+u y)^2+(x+u y) y+(a_1 b_1+\delta) y^2)=a_1 (x^2+x y+a_1 b_1 y^2) \simeq [a_1,b_1]_K
\end{eqnarray*}
(The first isometry is obtained by replacing $y$ with $a_1 y$.)

Assume $\operatorname{char}(F)\neq 2$.
On the one hand $\delta \in F^\times$. On the other hand, $\delta=u^2$ for some $u \in K^\times$ because $f_K \in I_q^2 K$.
Now, $f \simeq \langle a_1, \dots ,a_n \rangle$ for some $a_1,\dots,a_n \in F^\times$.
Set $f'=\langle \delta^{-1} a_1,a_2 \dots ,a_n \rangle$.
Because of the discriminant, $f' \in I_q^2 F$.
It is obvious that $f'_K \simeq f_K$.
\end{proof}

\section{Square-central and Artin-Schreier elements}

Let $A$ be a division algebra of exponent 2 and degree $2^n$ over a field $F$ for some $n \geq 2$.
In this section we prove a couple of facts about square-central and Artin-Schreier elements in $A$ without restricting the cohomological dimension of $F$.
If $\operatorname{char}(F)\neq 2$ and $x$ is square-central then any element $t$ in the algebra decomposes into $t_0+t_1$ such that $t_0=\frac{1}{2} (t+x t x^{-1})$ commutes with $x$ and $t_1=\frac{1}{2} (t-x t x^{-1})$ anti-commutes with $x$.
If $\operatorname{char}(F)=2$ and $x$ is Artin-Schreier then any element $t$ in the algebra decomposes into $t_0+t_1$ such that $t_0=x t+t x+t$ commutes with $x$ and $t_1=x t+t x$ satisfies $x t_1+t_1 x=t_1$.

\begin{lem}\label{deg4}
Assume $A$ is a division algebra of degree 4 and exponent 2. Let $x$ and $x'$ be two commuting elements in $A$ with $F[x] \neq F[x']$.
If $\operatorname{char}(F)=2$ then assume $x$ is either Artin-Schreier or square-central and $x'$ is Artin-Schreier.
If $\operatorname{char}(F)\neq 2$ assume that $x$ and $x'$ are square-central.
Then $A$ decomposes as $Q_1 \otimes Q_2$ such that $x \in Q_1$ and $x' \in Q_2$.
\end{lem}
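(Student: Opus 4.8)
My plan is to reduce the statement to the construction of a single ``partner'' element for $x$, and then to invoke the double centralizer theorem. Set $K:=F[x]$, $K':=F[x']$ and $M:=F[x,x']$. Since $F[x]\neq F[x']$ are distinct quadratic subfields and $x,x'$ commute, the commutative subalgebra $M$ has degree $4$ over $F$ and is therefore a maximal subfield of the biquaternion division algebra $A$. It suffices to produce a quaternion $F$-subalgebra $Q_1$ with $x\in Q_1$ and $x'\in\Ce{Q_1}$: then, by the double centralizer theorem, $Q_2:=\Ce{Q_1}$ is a quaternion division $F$-algebra, $A=Q_1\tensor[F]Q_2$, and $x'\in Q_2$, as required. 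Writing $Q_1=F[x,w]$, this amounts to finding a partner $w$ of $x$ --- in $\operatorname{char}(F)\neq 2$ a square-central element with $xw=-wx$; in $\operatorname{char}(F)=2$ a square-central $w$ with $xw+wx=w$ when $x$ is Artin-Schreier, or an Artin-Schreier $w$ with $wx+xw=x$ when $x$ is square-central --- that in addition commutes with $x'$ and whose defining scalar (namely $w^2$, respectively $w^2+w$) lies in $F$.

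First I would locate a candidate inside $C:=\Ce{x'}=\Ce{F[x']}$. By the double centralizer theorem $C$ is a quaternion algebra over $K'$, and it contains $x$ because $x$ commutes with $x'$; moreover $M=K'[x]$ is a maximal subfield of $C$. Since $x^2\in F\subseteq K'$ (respectively $x^2+x\in F\subseteq K'$), the element $x$ is square-central (respectively Artin-Schreier) in the quaternion $K'$-algebra $C$, hence belongs to a standard generating pair of $C$; this yields a partner $w_0\in C$ of $x$ with the required relation, and $w_0$ commutes with $x'$ by construction. The only missing property is that the defining scalar of $w_0$ lie in $F$ rather than merely in $K'$. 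The available freedom is to modify $w_0$ without destroying the relation with $x$ or the commutation with $x'$: in the square-central cases one replaces $w_0$ by $mw_0$ with $m\in M^\times$, which multiplies $w_0^2$ by $\operatorname{N}_{M/K'}(m)$; in the remaining case ($\operatorname{char}(F)=2$, $x$ square-central) one replaces $w_0$ by $w_0+t$ with $t\in M$, which alters $w_0^2+w_0$ additively.

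The main step is the descent, and this is where the exponent-$2$ hypothesis enters. Applying the norm $\operatorname{N}_{K'/F}$ to a hypothetical identity $w_0^2=\lambda\,\operatorname{N}_{M/K'}(m)$ (with $\lambda\in F^\times$) shows that a \emph{necessary} condition for descent is $\operatorname{N}_{K'/F}(w_0^2)\in\operatorname{N}_{K/F}(K^\times)$, that is, the vanishing of the $F$-symbol with slots $x^2$ and $\operatorname{N}_{K'/F}(w_0^2)$; the content of the descent is that this condition is also \emph{sufficient}, which is a norm-principle argument for the degree-$4$ extension $M/F$. By the projection formula for the corestriction this $F$-symbol equals $\cores[K'/F][C]$, where $C\cong(x^2,w_0^2)_{K'}$, and since $[C]=\res[K'/F][A]$ and $\cores[K'/F]\circ\res[K'/F]=2\cdot\mathrm{id}$ on $\Br(F)$, we obtain $\cores[K'/F][C]=2[A]=0$. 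In $\operatorname{char}(F)=2$ with $x$ square-central the same scheme runs with the multiplicative norm replaced by the Artin-Schreier trace $\operatorname{Tr}_{K'/F}$ (note that $K'=F[x']$ is separable, as $x'$ is Artin-Schreier). Adjusting $w_0$ accordingly produces a partner $w$ with scalar in $F$, so $Q_1=F[x,w]$ is a genuine quaternion $F$-subalgebra, and the double centralizer step above completes the proof. I expect the delicate point to be exactly the sufficiency half of the descent, i.e. the norm-principle argument identifying solvability with the vanishing of $\cores[K'/F][C]$.
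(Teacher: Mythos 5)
Your overall strategy is viable and genuinely different from the paper's (direct descent of the partner element via a norm condition, rather than an involution-theoretic argument), but as written it has a gap at exactly the decisive step. Everything you actually prove is the routine part: the reduction to finding a partner $w$ of $x$ with defining scalar in $F$, the parametrization of all admissible partners as $mw_0$ ($m\in M^\times$), resp.\ $w_0+t$ ($t\in M$), the necessity of the condition $N_{K'/F}(w_0^2)\in N_{K/F}(K^\times)$, and the verification that this condition holds because $\operatorname{cor}_{K'/F}[C]=\operatorname{cor}_{K'/F}\operatorname{res}_{K'/F}[A]=2[A]=0$. The entire content of the lemma is concentrated in the one claim you do not prove: that the corestriction condition is \emph{sufficient}, i.e.\ (in the case $\operatorname{char}(F)\neq 2$, with $a=x^2$, $\mu=w_0^2$, $M=K'(\sqrt a)$) that splitness of $(a,N_{K'/F}(\mu))_F$ implies $\mu\in F^\times\cdot N_{M/K'}(M^\times)$. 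Calling this ``a norm-principle argument for $M/F$'' names the difficulty rather than resolving it; it is not a formal consequence of anything you set up, and the standard norm principles (Knebusch, Scharlau) go in the opposite direction. Beware also the natural-looking shortcut: from $\operatorname{cor}_{K'/F}[C]=0$ one gets, via Albert--Riehm--Scharlau and \cite[Proposition~2.22]{BOI}, a quaternion $F$-algebra $Q$ with $C\cong Q\otimes_F K'$, but nothing forces $Q$ to contain $x$; correcting $Q$ by classes split by $K'$ reduces you to a statement of exactly the same shape with $K$ and $K'$ interchanged, i.e.\ a circle.

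The missing statement is true, so the gap is fillable, but it needs a real argument. In characteristic not $2$ the clean proof is a Scharlau transfer computation: with $s\colon K'\to F$, $s(1)=0$, $s(\sqrt{a'})=1$, and $\mu=p+q\sqrt{a'}$, $q\neq 0$, one finds $s_*(\mu\langle 1,-a\rangle)\simeq q\,\langle 1,-a\rangle\otimes\langle 1,-N_{K'/F}(\mu)\rangle$, which is isotropic precisely under your corestriction hypothesis; since $\mu\langle 1,-a\rangle_{K'}$ is anisotropic ($C$ is division), it must then represent some $\lambda\in F^\times$ (as $\ker(s)=F$), whence $\lambda\mu^{-1}\in N_{M/K'}(M^\times)$ as desired. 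The two characteristic-$2$ cases need analogous transfer arguments; note that when $x^2=b\in F^\times$ the admissible modifications change $w_0^2+w_0$ by $t_0^2+t_0+b\,t_1^2$ with $t_0,t_1\in K'$, so the relevant subgroup is $F+\wp(K')+bK'^2$ and the condition is not purely a trace condition. The paper itself avoids the norm principle entirely: since $A$ has exponent $2$, the prescribed involution of $F[x,x']$ (fixing or shifting $x$, shifting $x'$) extends to an involution $\tau$ of $A$; then $\tau$ restricts to a unitary involution on $C=C_A(x')$, and the fixed algebra of $\gamma\circ\tau$, where $\gamma$ is the canonical involution of $C$, is a quaternion $F$-algebra containing $x$ and centralizing $x'$ by \cite[Proposition~2.22]{BOI}. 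Either supply the transfer computation or switch to that argument; as it stands, your proof is incomplete precisely where the exponent-$2$ hypothesis has to be converted into the descended slot.
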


\begin{proof}
Assume $\operatorname{char}(F)=2$.
The involution on $F[x,x']$ mapping $x'$ to $x'+1$ and $x$ to $x+1$ if $x$ is Artin-Schreier and to $x$ if $x$ is square-central extends to $A$. Denote this involution by $\tau$. Then $\tau$ restricts to an involution of the second kind on the centralizer $C(x')$ (a quaternion algebra over $F[x']$), and it commutes with the canonical involution $\gamma$ on this algebra. Therefore, $\gamma \circ \tau$ is an automorphism of order 2 on $C(x')$, and the algebra of fixed points is a quaternion algebra $Q_1$ over $F$ containing $x$ and centralizing $x'$. (See \cite[Proposition 2.22]{BOI}.)

The proof in characteristic not 2 can be written in a similar way, or concluded from \cite[Proposition 3.9]{ChapVish2}.
\end{proof}

\begin{prop}\label{charnot2commuting}
Assume $F$ is a 2-field and $A$ is a division algebra of exponent 2 and degree $2^n$ over $F$.
If $\operatorname{char}(F)=2$ then for any two Artin-Schreier elements in $A$ there exists an element, either Artin-Schreier or square-central, commuting with them both. If $\operatorname{char}(F)\neq 2$, for any two square-central elements in $A$ there exists a square-central element commuting with them both.
\end{prop}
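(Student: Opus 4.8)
The plan is to reduce the statement to showing that the subalgebra $D = F[v,w]$ generated by the two given elements is a \emph{proper} subalgebra of $A$, and then to exploit the $2$-field hypothesis. Once $D \subsetneq A$ is known, the Double Centralizer Theorem applies to the simple (indeed division) subalgebra $D \subseteq A$: its centralizer $C_A(D)$ is again a division algebra, and the relation $\dim_F D \cdot \dim_F C_A(D) = \dim_F A$ forces $C_A(D) \neq F$. Hence $C_A(D)$ contains a subfield $M$ with $F \subsetneq M$, and since $F$ is a $2$-field we have $[M:F]=2^k$ with $k\geq 1$; a standard $2$-group argument on the Galois closure (or, in the purely inseparable case in characteristic $2$, passing to a suitable square root) produces a quadratic subextension $F[t] \subseteq M$. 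Its generator $t$ may be taken Artin-Schreier or square-central, and it commutes with all of $D$, in particular with $v$ and $w$. This is the desired element.

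It remains to prove $D \neq A$, and here I would produce, when $v$ and $w$ do not commute, an explicit element commuting with both. If $v,w$ commute there is nothing to do (take $t=v$), so assume they do not. Using the decomposition recalled at the start of this section, write $w = w_0 + w_1$ with $w_0$ commuting with $v$: in characteristic $2$, $w_0 = vw+wv+w$ and $w_1 = vw+wv$; in characteristic not $2$, $w_0 = \frac{1}{2}(w+vwv^{-1})$ and $w_1 = \frac{1}{2}(w - vwv^{-1})$, where $w_1$ lies in the complementary summand of the $\mathbb{Z}/2$-grading of $A$ determined by $v$. I claim that $z := v + w_0$ (characteristic $2$), respectively $z := v w_0$ (characteristic not $2$), commutes with both $v$ and $w$. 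Commutation with $v$ is immediate since $v$ and $w_0$ commute. Commutation with $w$ follows from two grading identities obtained by splitting the defining relation of $w$ (namely $w^2+w\in F$ in characteristic $2$, and $w^2\in F$ otherwise) into its component in $C_A(v)$ and its component in the complementary summand: in characteristic $2$ one gets $[v,w]=w_1$ and $w_0 w_1 + w_1 w_0 = w_1$, whence $[z,w]=w_1+w_1=0$; in characteristic not $2$ one gets $w_0 w_1 + w_1 w_0 = 0$ together with $w_1 v = -vw_1$, whence $[z,w]=0$ after a short computation.

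If $z \notin F$, then $z$ is a noncentral element commuting with $v$ and $w$, so $C_A(D) \neq F$ and $D \subsetneq A$ as required. The only remaining case is $z \in F$, which I treat directly: $z\in F$ forces $w_0 \in F[v]$ (in characteristic $2$, $w_0 = v + \lambda$ for some $\lambda\in F$; in characteristic not $2$, $w_0 \in Fv$). Substituting this into the $C_A(v)$-component of the defining relation of $w$ shows $w_1^2 \in F$, so $w_1$ is square-central; since moreover $w_1$ conjugates $v$ to $v+1$ (respectively anticommutes with $v$), the subalgebra $F[v,w_1]$ is a quaternion algebra. As $w_0 \in F[v]$ we have $D = F[v,w] = F[v,w_1]$, of dimension $4$, and because $n \geq 2$ we have $\dim_F A \geq 16 > 4$, so again $D \subsetneq A$.

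The main obstacle is the verification that $z$ genuinely commutes with $w$, that is, the bookkeeping of the $\mathbb{Z}/2$-grading induced by $v$ and the two resulting quadratic identities; the degenerate case $z\in F$ is precisely what makes the quaternion subalgebra $F[v,w_1]$ appear, and is the reason the hypothesis $n\geq 2$ is needed. The $2$-field hypothesis enters only at the very end, to extract a quadratic, and hence Artin-Schreier or square-central, subextension from the nontrivial centralizer $C_A(D)$.
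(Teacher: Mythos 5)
Your proof is correct and follows essentially the same route as the paper's: the same grading decomposition $w=w_0+w_1$, the same identity $w_0w_1+w_1w_0=w_1$ (resp.\ $=0$) forced by the centrality of $w^2+w$ (resp.\ $w^2$), the same commuting element $z=v+w_0$ (resp.\ $vw_0$) — which is exactly a generator of the paper's field $K=F[w_1^2,\,v+w_0]$ (resp.\ $F[w_1^2,\,vw_0]$) — and the same 2-field extraction of a quadratic subextension at the end. The only cosmetic difference is that the paper packages the dimension argument via the quaternion algebra $K[v,w_1]$ and the tensor decomposition $A=F[v,w]\otimes A_0$, where you instead invoke the Double Centralizer Theorem to get $C_A(D)\neq F$; these are the same argument.
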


\begin{proof}
Assume $\operatorname{char}(F)=2$.
Let $x,t$ be two Artin-Schreier elements in $A$.
Now, $t=t_0+t_1$ where $t_0$ commutes with $x$ and $t_1$ satisfies $x t_1+t_1 x=t_1$.
The element $t^2+t=t_0^2+t_1 t_0+t_0 t_1+t_1^2+t_0+t_1$ decomposes similarly as $T_0+T_1$ where $T_0$ commutes with $x$ and $T_1$ satisfies $x T_1+T_1 x=T_1$. Clearly $T_0=t_0^2+t_1^2+t_0$ and $T_1=t_1 t_0+t_0 t_1+t_1$. 
However, $t^2+t$ is central, which means that $T_1=0$ and so $t_1 t_0+t_0 t_1=t_1$.
If $t_1=0$ then $x$ and $t$ commute and the statement is trivial.
If $t_1 \neq 0$ then $F[x,t]=K[x,t_1]$ where $K$ is the field extension generated over $F$ by $t_1^2$ and $x+t_0$. Clearly $K[x,t_1]$ is a quaternion algebra over $K$.
If the $K=F$ then $A=F[x,t] \otimes A_0$. Now $A_0$ is of degree $2^{n-1}$ and therefore it contains a field extension of degree $2^{n-1}$ of the center, and since $F$ is a 2-field it contains a quadratic field extension of $F$ (see \cite[Proposition 101.15]{EKM}), which is generated by either a square-central or Artin-Schreier element. This element commutes with $x$ and $t$.
If $K \neq F$ then it must be of degree a power of 2.
Since $F$ is a 2-field, $K$ must contain a quadratic field extension of $F$ which is generated by either a square-central or Artin-Schreier element. This element commutes with $x$ and $t$.

Assume $\operatorname{char}(F) \neq 2$.
Let $x,t$ be two square-central elements in $A$.
Now, $t=t_0+t_1$ where $t_0$ commutes with $x$ and $t_1$ anti-commutes with $x$.
The element $t^2=t_0^2+t_1 t_0+t_0 t_1+t_1^2$ decomposes similarly as $T_0+T_1$ where $T_0$ commutes with $x$ and $T_1$ satisfies $x T_1=-T_1 x$. Clearly $T_0=t_0^2+t_1^2$ and $T_1=t_1 t_0+t_0 t_1$. 
However, $t^2$ is central, which means that $T_1=0$ and so $t_1 t_0=-t_0 t_1$.
If $t_1=0$ then $x$ and $t$ commute and the statement is trivial.
If $t_1 \neq 0$ then $F[x,t]=K[x,t_1]$ where $K$ is the field generated over $F$ by $t_1^2$ and $x t_0$. Clearly $K[x,t_1]$ is a quaternion algebra over $K$.
If $K=F$ then $A=F[x,t] \otimes A_0$. Now $A_0$ is of degree $2^{n-1}$ and therefore it contains a field extension of degree $2^{n-1}$ of the center, and since $F$ is a 2-field it contains a quadratic field extension of $F$, which is generated by a square-central element. This element commutes with $x$ and $t$.
If $K \neq F$ then it must be of degree a power of 2.
Since $F$ is a 2-field, $K$ must contain a quadratic field extension of $F$ which is generated by a square-central element. This element commutes with $x$ and $t$.
\end{proof}

\section{Fields of cohomological dimension 2}\label{Secdec}

The following fact appeared in \cite[Theorem 3]{Kahn} under the assumption $\operatorname{char}(F)\neq 2$:

\begin{thm}\label{Merkurjev}
Let $F$ be a field with $\operatorname{cd}_2(F) \leq 2$. For any form $f \in I_q^2 F$, $E(f)$ is a division algebra if and only if $f$ is anisotropic.
As a result, every algebra of exponent $2$ and power $2^m$ is isomorphic to $E(f)$ for some $f \in I_q^2 F$, and therefore decomposes as the tensor product of quaternion algebras.
\end{thm}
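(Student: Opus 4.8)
The characteristic $\neq 2$ case is \cite[Theorem 3]{Kahn}, so the plan is to establish the characteristic $2$ case by mirroring that argument with the quadratic-form tools available here: the vanishing $I_q^3 L = 0$ for every finite $L/F$, the excellence of quadratic extensions, and \Lref{exact}. The easy direction requires nothing about $\operatorname{cd}_2(F)$: if $f \in I_q^2 F$ is isotropic, write $f \simeq \mathbb{H} \perp f_1$ (with $\mathbb{H}$ a hyperbolic plane) where $f_1 \in I_q^2 F$ has dimension $\dim f - 2$. Since $\mathbb{H}$ lies in $I_q^2 F$ and has trivial Clifford invariant, $c(f) = c(f_1)$, so $E(f)$ is Brauer-equivalent to $E(f_1)$; as $\deg E(f_1) = \tfrac12 \deg E(f)$, the index of $E(f)$ is at most $\deg E(f_1) < \deg E(f)$ and $E(f)$ is not division.

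The substance is the converse, which I would recast as a dimension count: for a class $\beta = [D] \in \operatorname{Br}_2(F)$ with $\operatorname{ind}(D) = 2^k$, the anisotropic form in $I_q^2 F$ with Clifford invariant $\beta$ has dimension exactly $2(k+1)$. The inequality $\dim f \geq 2(k+1)$ is automatic from $\deg E(f) \geq \operatorname{ind}(E(f))$, and the reverse bound is equivalent to ``$f$ anisotropic $\Rightarrow E(f)$ division''. I would prove this by induction on $k$, first reducing to the case where $F$ is a 2-field: passing to the odd closure $F'$ preserves $\operatorname{cd}_2 \leq 2$, preserves anisotropy of $f$ by Springer's theorem, and preserves the $2$-primary index of $D$, so the statement over $F'$ implies it over $F$. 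Over a 2-field, a division algebra $D$ of exponent $2$ and index $2^k \geq 2$ has a separable quadratic subfield $K$ (a quadratic subextension of a separable maximal subfield, which exists since the Galois closure has $2$-group Galois group), and then $\operatorname{ind}(D_K) = 2^{k-1}$. Over $K$ we have $f_K \in I_q^2 K$ with $c(f_K) = [D_K]$; because $I_q^3 K = 0$ the Witt class $f_K$ is determined by its Clifford invariant, so the induction hypothesis identifies $(f_K)_{\mathrm{an}}$ with the dimension-$2k$ form carrying $[D_K]$. Excellence of $K/F$ descends $(f_K)_{\mathrm{an}}$ to a form over $F$, and \Lref{exact} guarantees the descended form can be taken in $I_q^2 F$; the difference with $f$ then lies in the kernel of restriction to $K$, the ideal generated by the norm form of $K/F$.

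The hard part will be closing this into the sharp bound $\dim f \leq 2(k+1)$. Reducing the index of $D$ by a quadratic extension is by itself consistent with $f$ of arbitrarily large dimension, so the bound is equivalent to showing that the Witt index of $f_K$ is exactly $1$, and this cannot be extracted from index reduction alone: it must be forced using $I_q^3 = 0$ as an external lever, exactly as in \cite{Kahn}. Concretely, after writing $f \equiv \phi' + \rho \cdot q$ with $\phi' \in I_q^2 F$ of dimension $2k$, $\rho$ the one-fold Pfister norm form of $K/F$, and $q$ a form over $F$, I would feed the Pfister multiple $\rho \cdot q$ back into the induction and use the vanishing of $I_q^3$ over $F$ and over the intermediate quadratic extensions, together with the characteristic-$2$ Arason--Pfister and linkage estimates, to cap its anisotropic dimension. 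The first nontrivial instance ($k=1$) is Albert's theorem that a biquaternion algebra is division iff its dimension-$6$ Albert form is anisotropic; the general case is the inductive extension of this. The bookkeeping with Arf invariants in these estimates is the delicate point where the argument genuinely departs from the characteristic $\neq 2$ treatment.

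Finally, granting the biconditional, the ``as a result'' clause is short. Given a central simple $F$-algebra $A$ of exponent $2$ and degree $2^m$ with underlying division algebra $D$ of index $2^k$, surjectivity of $c \colon I_q^2 F \to \operatorname{Br}_2(F)$ provides an anisotropic $g \in I_q^2 F$ with $c(g) = [D]$; by the biconditional $E(g)$ is division and Brauer-equivalent to $D$, hence $E(g) \cong D$ and $\dim g = 2(k+1)$. Setting $f = g \perp (m-k)\,\mathbb{H} \in I_q^2 F$ keeps the Clifford invariant unchanged and gives $\deg E(f) = 2^{m}$, so $E(f) \cong A$. Since $E(f)$ is a tensor product of quaternion algebras by the structure of the Clifford algebra recalled in \Sref{Secexact}, so is $A$.
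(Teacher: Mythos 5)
Your overall skeleton matches the paper's proof: the easy direction via splitting off a hyperbolic plane, the reduction to the odd closure (a 2-field) by Springer's theorem, and an induction in which one picks a quadratic subfield $K$ of the division algebra underlying $E(f)$, notes that the index drops exactly by $2$ over $K$ because $K$ embeds in that division algebra, and applies the induction hypothesis to the anisotropic part of $f_K$. The ``as a result'' clause is also handled correctly. But there is a genuine gap exactly where you say ``the hard part will be closing this into the sharp bound'': the claim that the Witt index of $f_K$ is exactly $1$, i.e.\ that $\dim (f_K)_{\mathrm{an}} = \dim f - 2$, is never proved. You only offer a plan --- ``feed the Pfister multiple $\rho \cdot q$ back into the induction'' and invoke ``characteristic-$2$ Arason--Pfister and linkage estimates'' with ``bookkeeping with Arf invariants'' --- and you yourself flag this as the delicate point. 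Without this step the induction does not close: index reduction alone gives $\operatorname{ind} E(f) = 2\operatorname{ind} E(f_K)$, but if $f_K$ had Witt index $\geq 2$ the induction hypothesis would apply to a form of dimension $\leq \dim f - 4$, and the resulting index would fall short of $\deg E(f)$, so division-ness would not follow. A proof attempt that stops at ``this must be forced using $I_q^3=0$ as an external lever, exactly as in Kahn'' is a statement of the difficulty, not its resolution.

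The paper closes this gap with a short argument that is much simpler than the machinery you gesture at, and which you never find. First, every form in $I_q^2 F$ is \emph{universal}: for any $b \in F^\times$, $f \perp b f \in I_q^3 F = 0$, so $f \simeq b f$ and hence $D(f) = F^\times$. Then, if $f$ is anisotropic of dimension $\geq 6$ and $f_K$ is isotropic for $K = F[x : x^2+x=a]$, write $f = (f' \otimes [1,a]) \perp f_0$ with $\dim f'$ equal to the Witt index of $f_K$ (\cite[Proposition 34.8]{EKM}); if $\dim f' \geq 2$, then $f$ contains a proper subform $\phi \otimes [1,a]$ with $\phi$ of dimension $2$, and this subform lies in $I_q^2 F$, hence is universal, forcing $f$ to be isotropic --- a contradiction. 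No linkage or Arason--Pfister estimates, and no Arf-invariant bookkeeping, are needed; the lever $I_q^3 = 0$ enters only through universality. (Incidentally, the excellence-plus-\Lref{exact} descent you build into your induction is also unnecessary for this theorem; the paper reserves that machinery for \Tref{decompose}.)
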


\begin{proof}
By the remark preceding this theorem, we can assume $\operatorname{char}(F)=2$.
First we show that every from $f \in I_q^2 F$ is universal (i.e. represents every element of $F$):
Let $D(f)$ denote the set of nonzero elements represented by $f$. Since $\operatorname{cd}_2(F) \leq 2$ and $f \perp b f \in I_q^3 F$ for any $b \in F^\times$, we have $f \simeq b f$. This means $F^\times D(f) \subseteq D(F)$ and so $D(f)=F^\times$.

Now we show that if $f \in I_q^2 F$ is an anisotropic form of dimension at least $6$ and $f_K$ is isotropic for some $K=F[x: x^2+x=a]$ then $f_K=\varmathbb{H} \perp f_0$ where $\varmathbb{H}$ is a hyperbolic plane and $f_0$ is anisotropic:
Since $f_K$ is isotropic, $f=(f' \otimes [1,a]) \perp f_0$ for some bilinear form $f'$ and anisotropic form $f_0$ (see \cite[Proposition 34.8]{EKM}).
The dimension of $f'$ is equal to the Witt index of $f_K$.
If the dimension of $f'$ is greater than $1$ then it contains a subform $\phi$ of dimension $2$, and then $\phi \otimes [1,a]$ is a proper subform of $f$.
This proper subform is in $I_q^2 F$, which means that it is universal, and so $f$ is isotropic, contradiction.

Let $f$ be a form in $I_q^2 F$.
Clearly if $f$ is isotropic then $E(f)$ is not a division algebra.
Therefore assume that $f$ is anisotropic.
We want to show that $E(f)$ is a division algebra.
According to Springer's classical theorem (see \cite[Corollary 18.5]{EKM}) $f$ remains anisotropic under odd degree extensions.
Therefore $f_{F'}$ is anisotropic where $F'$ is the odd closure of $F$. Since $E(f_{F'})=E(f) \otimes F'$, it is enough to prove that $E(f_{F'})$ is a division algebra.
Assume $F$ is a 2-field then.

If $E(f)$ is split then since $I_q^2 F \cong Br_2(F)$, $f$ is hyperbolic.
Assume then that $E(f)$ is nonsplit.
Hence the dimension of $f$ is at least 4.
If $f$ is of dimension $4$ then it is known that $f$ is the norm form of the algebra $E(f)$, which is a quaternion algebra, and according to \cite[Corollary 12.5]{EKM} a quaternion algebra is a division algebra if and only if its norm form is anisotropic.
Therefore assume the dimension of $f$ is $2 m \geq 6$.

We use induction on $m$. 
Since $F$ is a 2-field with $\operatorname{cd}_2(F) \leq 2$, any finite field extension $K/F$ is also a 2-field with $\operatorname{cd}_2(K) \leq 2$.
The induction hypothesis is that for each $2 \leq t<m$ and every 2-field $L$ with $\operatorname{cd}_2(L) \leq 2$, if $\phi$ is an anisotropic form in $I_q^2 L$ of dimension $2 t$ then $E(\phi)$ is a division algebra.
Since $E(f)$ is nonsplit, it is a matrix algebra over a nontrivial division algebra of degree a power of $2$ over $F$. This algebra contains a nontrivial field extension of $F$, and since $F$ is a 2-field, this field extension contains a quadratic field extension $K$ of $F$. 
Since the dimension of $f$ is at least 6, the dimension of the anisotropic part of $f_K$ is $2(m-1)$.
Now, $K$ is also a 2-field with $\operatorname{cd}_2(K) \leq 2$, so by the induction hypothesis, $E(f_K)$ is of index $2^{m-2}$.
On the other hand, the index of $E(f)$ is at least twice the index of $E(f) \otimes K=E(f_K)$, which means that it must be $2^{m-1}$.
Consequently, $E(f)$ is a division algebra.
\end{proof}

The following theorem extends \cite[Theorem 3.3]{Barry} which presents a similar decomposition on the symbol-level for division algebras of exponent 2 over fields of cohomological 2-dimension 2 and characteristic not 2:

\begin{thm}\label{decompose}
Let $F$ be a field with $\operatorname{cd}_2(F) \leq 2$, and $A$ be a division algebra over $F$ of exponent 2 and degree $2^n$ for some $n \geq 2$. Let $x$ and $x'$ be two commuting elements in $A$ with $F[x] \neq F[x']$.
If $\operatorname{char}(F)=2$ then assume $x$ is either Artin-Schreier or square-central and $x'$ is Artin-Schreier.
If $\operatorname{char}(F)\neq 2$ assume that they are both square-central elements. Then $A=Q_1 \otimes Q_2 \otimes \dots \otimes Q_n$ such that $x \in Q_1$ and $x' \in Q_2$.
\end{thm}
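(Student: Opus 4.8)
The plan is to reduce the entire statement to a single application of \Lref{deg4} followed by \Tref{Merkurjev}. Concretely, it suffices to produce two commuting quaternion $F$-subalgebras $Q_1 \ni x$ and $Q_2 \ni x'$ of $A$ with $Q_1 \cap Q_2 = F$: once these are in hand, $D := Q_1 \otimes Q_2$ is a degree-$4$ central simple $F$-subalgebra of $A$, so by the double centralizer theorem $A = D \otimes C_A(D)$ with $C_A(D)$ of degree $2^{n-2}$ and exponent dividing $2$. \Tref{Merkurjev} then decomposes $C_A(D) = Q_3 \otimes \dots \otimes Q_n$, and combining gives $A = Q_1 \otimes \dots \otimes Q_n$ with $x \in Q_1$ and $x' \in Q_2$. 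So no induction is needed; all the work is in constructing the two rational factors.

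I would build these factors one at a time, peeling $x'$ off first. Set $L = F[x']$; since $x'$ is Artin--Schreier in characteristic $2$ or square-central in characteristic $\neq 2$, the extension $L/F$ is separable quadratic, and $B := C_A(x')$ is a division central simple $L$-algebra of degree $2^{n-1}$ containing $x$, with $[B] = \operatorname{res}_{L/F}[A]$ so that its Brauer class descends to $F$. Mimicking the proof of \Lref{deg4}, I would extend the involution of $F[x,x']$ that fixes $x$ and sends $x'$ to its nontrivial conjugate to an involution $\tau$ of $A$; it restricts to an involution of the second kind on $B$. Choosing an involution $\gamma$ of the first kind on $B$ that commutes with $\tau|_B$ and fixes $x$, the composite $\phi = \gamma \circ \tau|_B$ is an $L/F$-semilinear automorphism of order $2$, i.e.\ a descent datum; its algebra of fixed points $A_1 := B^{\phi}$ is an $F$-subalgebra of degree $2^{n-1}$ with $A_1 \otimes_F L \cong B$ and $x \in A_1$. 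A second application of the double centralizer theorem then yields $A = A_1 \otimes Q_2$ with $Q_2 := C_A(A_1)$ a quaternion $F$-algebra containing $x'$, and $A_1$ is again a division algebra of exponent dividing $2$ over $F$.

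It now remains to split a quaternion factor $Q_1 \ni x$ off $A_1$. When $x$ is separable (automatic in characteristic $\neq 2$, and the Artin--Schreier case in characteristic $2$), this is literally the previous paragraph applied to $x$ inside $A_1$, leaving a complementary factor that \Tref{Merkurjev} decomposes. The one genuinely different case is characteristic $2$ with $x$ square-central, where $F[x]$ is inseparable and the descent above is unavailable. Here I would instead exploit that $\operatorname{ad}_x \colon t \mapsto xt - tx$ satisfies $\operatorname{ad}_x^2 = 0$ (because $x^2 \in F$), so $\operatorname{im}(\operatorname{ad}_x) \subseteq \ker(\operatorname{ad}_x) = C_{A_1}(x)$; a dimension count shows these two spaces coincide, and since $x \in C_{A_1}(x)$ there is $u \in A_1$ with $xux^{-1} = u+1$. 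Adjusting $u$ by a suitable element of $C_{A_1}(x)$ so that $F[x,u]$ closes up to a quaternion $F$-algebra $Q_1$ then splits it off, and \Tref{Merkurjev} finishes with the centralizer.

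The hard part is the descent in the middle paragraph: passing from the equality of Brauer classes $[B] = \operatorname{res}_{L/F}[A]$ to an honest $F$-rational subalgebra $A_1 \subseteq A$ containing the prescribed element $x$. This requires constructing the commuting pair of involutions $(\gamma, \tau|_B)$ with the correct behaviour on $x$, exactly as in the degree-$4$ case but now on an algebra that is no longer a quaternion algebra, so no canonical involution is available and compatibility must be arranged by hand. The secondary obstacle is the inseparable square-central case, where bringing $u^2 + u$ --- which a priori lies only in $C_{A_1}(x)$ --- into $F$ is precisely where the hypothesis $\operatorname{cd}_2(F) \leq 2$, through the universality of forms in $I_q^2 F$ established in the proof of \Tref{Merkurjev}, must be invoked.
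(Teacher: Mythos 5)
Your opening reduction (produce two commuting quaternion subalgebras $Q_1\ni x$, $Q_2\ni x'$, apply the double centralizer theorem, then decompose the complement by \Tref{Merkurjev}) is sound and is in fact the same outline as the paper's, which works with $C_A(F[x,x'])$ and \Lref{deg4}. The genuine gap is in your middle step, and it is not a matter of ``arranging compatibility by hand.'' The existence of an involution $\gamma$ of the first kind on $B=C_A(x')$ commuting with $\tau|_B$ is \emph{equivalent} to the existence of a descent datum on $B$, i.e.\ of an $F$-algebra $A_1$ of degree $2^{n-1}$ with $A_1\otimes_F L\cong B$; and a short Brauer-group computation (using that classes in $\operatorname{Br}(L/F)$ are represented by quaternion algebras containing $L$) shows this is in turn equivalent to the decomposition $A\cong A_1\otimes Q_2$ with $L\subseteq Q_2$ — which is essentially the statement you are trying to prove. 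This equivalence is not vacuous: over general fields such decompositions fail, since there exist indecomposable division algebras of degree $8$ and exponent $2$ (Amitsur--Rowen--Tignol), and these contain commuting square-central elements $x,x'$ with $F[x]\neq F[x']$. So any construction of the commuting pair $(\gamma,\tau|_B)$ must use $\operatorname{cd}_2(F)\leq 2$ in an essential way, whereas your proposal invokes that hypothesis only for the secondary inseparable case; as written, the main step is circular. The reason the analogous construction works in \Lref{deg4} is special to quaternion algebras: the canonical symplectic involution commutes with \emph{every} automorphism and anti-automorphism, so no compatibility needs to be arranged; no such canonical involution exists on $B$ in higher degree.

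The paper supplies exactly the missing mechanism, and it is form-theoretic rather than involution-theoretic: under $\operatorname{cd}_2(F)\leq 2$ one has $I_q^2F\cong \operatorname{Br}_2(F)$, so the paper takes the anisotropic $f\in I_q^2F$ with $E(f)\cong A$, passes to the anisotropic part of $f_{F[x]}$, descends it to a form over $F$ using excellence of quadratic extensions together with \Lref{exact}, repeats the process over $F[x,x']$, and thereby produces $\tau\in I_q^2F$ with $E(\tau)\otimes_F F[x,x']\cong C_A(F[x,x'])$. That realizes the centralizer as $F[x,x']\otimes B'$ with $B'$ defined over $F$, which is precisely the descent your $A_1$ was supposed to provide. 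To repair your argument you would need to replace ``choose $\gamma$ commuting with $\tau|_B$ and fixing $x$'' with an actual construction, and any such construction will amount to redoing this quadratic-form descent (or an equivalent use of $I_q^3=0$); the involution formalism by itself cannot see the cohomological hypothesis.
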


\begin{proof}
Let $B$ be the centralizer of $F[x,x']$ in $A$.
It is enough to prove that $B=F[x,x'] \otimes B'$ for some central simple algebra $B'$ over $F$,
because then $A=A_0 \otimes B'$ (see \cite[Theorem 1.5]{BOI}) where $A_0=C_A(B)$ is a degree $4$ central simple algebra of exponent $\leq 2$ containing $x$ and $x'$ and therefore it decomposes as $Q_1 \otimes Q_2$ such that $x \in Q_1$ and $x' \in Q_2$ according to Lemma \ref{deg4}, and $B'$ decomposes as $Q_3 \otimes \dots \otimes Q_n$ according to Theorem \ref{Merkurjev}.

Let $f$ be the unique quadratic form in $I_q^2 F$ with $E(f) \cong A$.
In particular, $f$ is anisotropic.
However, $f_{F[x]}$ is isotropic, because $A \otimes F[x]$ is not a division algebra.
Since quadratic extensions are excellent, there exists $g \in I_q F$ such that $g_{F[x]}$ is isometric to the anisotropic part of $f_{F[x]}$, and according to Lemma \ref{exact} we can assume $g \in I_q^2 F$.

Now, $E(g_{F[x]})=C_A(F[x])$ and $g_{F[x]}$ is anisotropic.
However $g_{F[x,x']}$ is isotropic.
Assume $\operatorname{char}(F)=2$.
Then $g_{F[x]}=\phi \perp d [1,x'^2+x']$ for some $d \in F[x]$ and $\phi \in I_q F[x]$ such that $\phi_{F[x,x']}$ is the anisotropic part of $g_{F[x,x']}$.
Since $\operatorname{cd}_2(F) \leq 2$, $g_{F[x]} \perp d g_{F[x]}$ is hyperbolic, which means that $g_{F[x]} \simeq d g_{F[x]}$, and hence we can assume $d=1$.
Therefore $\phi$ is isometric to the anisotropic part of $g_{F[x]} \perp [1,x'^2+x']$.
Since $g \perp [1,x'^2+x']$ is in $I_q F$ and $F[x]/F$ is excellent, there exists $\phi'$ in $I_q F$ such that $\phi'_{F[x]}$ is isometric to $\phi$.
Now, $\phi'_{F[x,x']}$ is in $I_q^2 F[x,x']$, and therefore according to Lemma \ref{exact} there exists $\tau$ in $I_q^2 F$ such that $\tau_{F[x,x']}$ is isometric to $\phi'_{F[x,x']}$.
Consequently, $B$ is a restriction of $E(\tau)$ to $F[x,x']$.

Assume $\operatorname{char}(F) \neq 2$.
Then $g_{F[x]}=\phi \perp d \langle 1,-x'^2 \rangle$ for some $d \in F[x]$ and $\phi \in I_q F[x]$ such that $\phi_{F[x,x']}$ is the anisotropic part of $g_{F[x,x']}$.
Since $\operatorname{cd}_2(F) \leq 2$, $g_{F[x]} \perp -d g_{F[x]}$ is hyperbolic, which means that $g_{F[x]} \simeq d g_{F[x]}$, and hence we can assume $d=1$.
Therefore $\phi$ is isometric to the anisotropic part of $g_{F[x]} \perp \langle -1,x'^2 \rangle$.
Since $g \perp \langle -1,x'^2 \rangle$ is in $I_q F$ and $F[x]/F$ is excellent, there exists $\phi'$ in $I_q F$ such that $\phi'_{F[x]}$ is isometric to $\phi$.
Now, $\phi'_{F[x,x']}$ is in $I_q^2 F[x,x']$, and therefore according to Lemma \ref{exact} there exists $\tau$ in $I_q^2 F$ such that $\tau_{F[x,x']}$ is isometric to $\phi'_{F[x,x']}$.
Consequently, $B$ is a restriction of $E(\tau)$ to $F[x,x']$.
\end{proof}

\begin{cor}\label{charnot2anti}
If $\operatorname{char}(F)=2$ then for every two commuting Artin-Schreier elements $x,x' \in A$ with $F[x] \neq F[x']$ there exists a square-central element $z$ such that $x z+z x=x' z+z x'=z$.
If $\operatorname{char}(F)\neq 2$, for every two commuting square-central elements $x,x' \in A$ there exists a square-central element $z$ anti-commuting with them both.
\end{cor}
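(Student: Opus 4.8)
The plan is to combine \Tref{decompose} with the chain-lemma facts for single quaternion algebras that were recorded in the introduction. First I would invoke \Tref{decompose}: under the hypotheses of the corollary the two commuting elements $x,x'$ satisfy $F[x] \neq F[x']$ (in the characteristic $\neq 2$ case this is automatic since two commuting square-central elements generating the same field would force them to be equal up to scalars, and in characteristic $2$ it is part of the hypothesis), so $A$ decomposes as $Q_1 \otimes Q_2 \otimes \dots \otimes Q_n$ with $x \in Q_1$ and $x' \in Q_2$. This reduces the problem to producing the element $z$ inside the biquaternion algebra $Q_1 \otimes Q_2$, or even just using the individual quaternion factors.

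The second step is to produce in each factor a square-central element with the right commutation relation to $x$ (resp. $x'$). In characteristic $2$, given the Artin-Schreier element $x \in Q_1$, there exists a square-central element $y_1 \in Q_1$ with $x y_1 + y_1 x = y_1$: indeed $y_1$ is the standard generator anticommuting-in-the-Artin-Schreier-sense with $x$, exactly the element $y$ in the presentation $[\alpha,\beta)$ from the introduction. Likewise pick $y_2 \in Q_2$ square-central with $x' y_2 + y_2 x' = y_2$. In characteristic $\neq 2$, given square-central $x \in Q_1$ pick a square-central $y_1 \in Q_1$ with $x y_1 = -y_1 x$, and similarly $y_2 \in Q_2$ with $x' y_2 = -y_2 x'$.

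The final step is to set $z = y_1 y_2$ (in the characteristic $\neq 2$ case) or the appropriate product, and check the commutation relations. The key point is that $y_1$ commutes with everything in $Q_2$ and $y_2$ commutes with everything in $Q_1$, since the factors commute elementwise in the tensor product. In characteristic $\neq 2$ one checks $z$ is square-central (as $z^2 = y_1^2 y_2^2 \in F^\times$) and that $x z = x y_1 y_2 = -y_1 x y_2 = -y_1 y_2 x = -z x$, with the analogous computation for $x'$ using $y_2$. In characteristic $2$ the product requires a little more care because the Artin-Schreier relation $x z + z x = z$ is not multiplicative in the naive way; here I would instead work with $z = y_1 y_2$ and verify directly that $x z + z x = z$ using $x y_1 + y_1 x = y_1$ together with the commuting of $x$ with $y_2$, and symmetrically $x' z + z x' = z$, while confirming $z^2 = y_1^2 y_2^2 \in F^\times$ so that $z$ is square-central.

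The step I expect to be the main obstacle is the very first reduction: verifying cleanly that the hypotheses of \Tref{decompose} are met, in particular the condition $F[x] \neq F[x']$ in characteristic $\neq 2$. Two anticommuting-type relations and the requirement that the chosen $z$ be genuinely noncentral (not accidentally landing in $F$) also need attention; once $A = Q_1 \otimes Q_2 \otimes \dots \otimes Q_n$ is in hand, the remaining computations are the routine quaternion-algebra identities from the introduction and should present no difficulty.
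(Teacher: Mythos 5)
Your proposal is correct and follows essentially the same route as the paper: apply \Tref{decompose} to write $A = Q_1 \otimes Q_2 \otimes \dots \otimes Q_n$ with $x \in Q_1$, $x' \in Q_2$, choose square-central $y_1 \in Q_1$ and $y_2 \in Q_2$ with the stated relations to $x$ and $x'$ respectively, and take $z = y_1 y_2$ — which is exactly the paper's proof. The only difference is that you pause over the hypothesis $F[x] \neq F[x']$ in characteristic $\neq 2$ (a point the paper's own proof silently glosses over); your observation that equality forces $x' \in F x$ does not make the hypothesis ``automatic,'' but in that degenerate case any $z$ anticommuting with $x$ works, so the argument goes through.
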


\begin{proof}
Since $A=Q_1 \otimes Q_2 \otimes \dots \otimes Q_n$ where $x \in Q_1$ and $x' \in Q_2$, there exist square-central elements $y \in Q_1$ and $y' \in Q_2$ such that $y^2,y'^2 \in F$ and $x y+y x=y$ and $x' y'+y' x'=y'$ if $\operatorname{char}(F)=2$, and $y x=-x y$ and $y' x'=-x' y'$ if $\operatorname{char}(F) \neq 2$. Take $z=y y'$.
\end{proof}

\begin{cor}\label{char2anti}
If $\operatorname{char}(F)=2$, $x \in A$ is square-central, $x' \in A$ is Artin-Schreier and $x x'=x' x$, then there exist a square-central element $z$ and an Artin-Schreier element $w$ such that $w x+x w=x$ and $w z+z w=z=x' z+z x'=z$.
\end{cor}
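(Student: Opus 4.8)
The plan is to reduce to the two quaternion tensor-factors containing $x$ and $x'$, and then to build $w$ not inside a single factor but as a sum of contributions from both.

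First I would invoke \Tref{decompose}. Its hypotheses are met: $x$ is square-central and $x'$ is Artin-Schreier, and the condition $F[x]\neq F[x']$ holds automatically, since $F[x]$ is a purely inseparable quadratic extension (from $x^2\in F^\times$ and $x\notin F$ one gets $x^2\notin (F^\times)^2$) while $F[x']$ is separable. Hence $A=Q_1\otimes Q_2\otimes\cdots\otimes Q_n$ with $x\in Q_1$ and $x'\in Q_2$, and each $Q_i$ is a division quaternion algebra because $A$ is a division algebra. Working inside the single factors, the standard structure of a characteristic-$2$ quaternion division algebra supplies the two ``slot'' elements I need: since $x$ is square-central in $Q_1$, it generates the inseparable quadratic subfield $F[x]$, so $Q_1\cong[\alpha,x^2)$ for some $\alpha\in F$ with $x$ as the square-central generator, and there is an Artin-Schreier element $w_1\in Q_1$ with $w_1 x+x w_1=x$. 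Likewise, since $x'$ is Artin-Schreier in $Q_2$, there is a square-central element $z\in Q_2$ with $x' z+z x'=z$. This $z$ is the element I will keep; it already satisfies one of the three required identities, and $z^2\in F^\times$.

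The crucial observation is that $w_1$ by itself cannot serve as $w$: as $w_1\in Q_1$ and $z\in Q_2$ lie in commuting tensor factors, one gets $w_1 z+z w_1=0\neq z$. I would therefore set $w=w_1+x'$. Because $w_1\in Q_1$ and $x'\in Q_2$ commute, a short computation gives $w^2+w=(w_1^2+w_1)+(x'^2+x')\in F$, so $w$ is Artin-Schreier (and $w\notin F$ since $w_1\notin F$). Then $w x+x w=(w_1 x+x w_1)+(x' x+x x')=x+0=x$, using that $x$ and $x'$ commute; and $w z+z w=(w_1 z+z w_1)+(x' z+z x')=0+z=z$, using that $w_1$ and $z$ lie in complementary factors. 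Together with $x' z+z x'=z$, this yields all the required relations.

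The main difficulty is conceptual rather than computational: one must see that $w$ cannot be taken inside $Q_1$ and must absorb the term $x'$ from the second factor to repair the relation with $z$, while the two relations involving $x$ remain intact precisely because $x'$ centralizes all of $Q_1$. The only technical input beyond this is the classical fact, which I would cite or justify via the presentation $Q_i\cong[\,\cdot\,,\,\cdot\,)$, that any square-central (respectively Artin-Schreier) element of a characteristic-$2$ quaternion division algebra can be completed to a standard pair of generators; this is what produces $w_1$ and $z$.
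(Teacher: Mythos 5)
Your proof is correct, but it takes a genuinely different route from the paper's after the common first step. Both proofs begin by invoking Theorem~\ref{decompose} (and your verification that $F[x]\neq F[x']$ holds automatically --- inseparable versus separable quadratic extension --- is a point the paper leaves implicit), and both choose the standard Artin--Schreier companion $w_1\in Q_1$ of $x$. From there the paper keeps $w=w_1$ inside $Q_1$, observes that $w$ and $x'$ are commuting Artin--Schreier elements, and simply quotes Corollary~\ref{charnot2anti} to produce $z$; unwinding that corollary, its $z$ is a \emph{product} $yy'$ of square-central elements taken from the two factors of a decomposition adapted to the pair $(w,x')$. You instead keep $z=y'$ inside the single factor $Q_2$ and repair the failed relation $w_1z+zw_1=0$ by correcting $w$ \emph{additively}, $w=w_1+x'$, then verify all three identities by hand. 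The two constructions are dual in spirit: the paper pushes the cross-factor correction into $z$ multiplicatively, you push it into $w$ additively. What the paper's route buys is brevity --- a two-line reduction to an already-proven corollary; what yours buys is self-containment --- a single application of Theorem~\ref{decompose}, no second decomposition, and explicit elements throughout, which arguably makes the mechanism more transparent. One small point: your parenthetical ``$w\notin F$ since $w_1\notin F$'' silently uses $Q_1\cap Q_2=F$ (if $w\in F$ then $w_1=w+x'\in Q_1\cap F[x']\subseteq Q_1\cap Q_2=F$, a contradiction); this is standard, since $Q_1\cap Q_2\subseteq Q_1\cap C_A(Q_1)=\Zent(Q_1)=F$, but it is worth saying.
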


\begin{proof}
Since $A=Q_1 \otimes Q_2 \otimes \dots \otimes Q_n$ where $x \in Q_1$ and $x' \in Q_2$, there exists an Artin-Schreier element $w \in Q_1$ which satisfies $w x+x w=x$ and $w x'=x' w$. From here on we can apply Corollary \ref{charnot2anti}.
\end{proof}

From now on assume we focus on 2-fields.

\begin{thm}
Let $F$ be a 2-field with $\operatorname{cd}_2(F) \leq 2$, and $A$ be a division algebra over $F$ of exponent 2 and degree $2^n$ for some $n \geq 2$. If $\operatorname{char}(F)=2$ then for every two Artin-Schreier elemetns $x$ and $x'$ in $A$ there exists either a chain $x,y_1,x_1,y_2,x'$ such that $x_1$ is Artin-Schreier, $y_1$ and $y_2$ are square-central and $x y_1+y_1 x=x_1 y_1+y_1 x_1=y_1$ and $x' y_2+y_2 x'=x_1 y_2+y_2 x_1=y_2$, or a chain $x,y_1,x_1,y_2,x_3,y_3,x'$ with similar properties.
If $\operatorname{char}(F)\neq 2$ then for every two square-central elements $x$ and $x'$ in $A$ there exists a chain of square-central elements $x=x_0,x_1,x_2,x_3,x_4=x'$ such that $x_i x_{i+1}=-x_{i+1} x_i$ for each $0 \leq i \leq 3$.
\end{thm}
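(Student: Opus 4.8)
The plan is to route the two given elements through a common ``hub'': an element $z$ commuting with both $x$ and $x'$, supplied by \Pref{charnot2commuting}. Once such a $z$ is in hand, I would build a short anti-commuting (respectively Artin--Schreier) path from $x$ down to $z$ and another from $z$ up to $x'$, and then splice the two halves at $z$. The halves are produced by the ``anti-commuting'' corollaries, and the length of the resulting chain is governed purely by how many intermediate elements each corollary inserts.

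In characteristic not $2$ this is clean. First apply \Pref{charnot2commuting} to obtain a square-central $z$ commuting with both $x$ and $x'$. Then apply \Cref{charnot2anti} to the commuting pair $x,z$ to get a square-central $y_1$ anti-commuting with both $x$ and $z$, and apply it again to the commuting pair $z,x'$ to get a square-central $y_2$ anti-commuting with both $z$ and $x'$. Every consecutive pair in $x,y_1,z,y_2,x'$ then anti-commutes, which is exactly the required chain $x=x_0,\dots,x_4=x'$.

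In characteristic $2$ the element $z$ from \Pref{charnot2commuting} is either Artin--Schreier or square-central, and this dichotomy produces the two chain shapes in the statement. If $z$ is Artin--Schreier, I would apply \Cref{charnot2anti} to the commuting Artin--Schreier pairs $x,z$ and $z,x'$, obtaining square-central $y_1,y_2$ with $xy_1+y_1x=zy_1+y_1z=y_1$ and $zy_2+y_2z=x'y_2+y_2x'=y_2$; the chain $x,y_1,z,y_2,x'$ (with middle Artin--Schreier term $x_1=z$) is the first option. If instead $z$ is square-central, I would apply \Cref{char2anti} to the commuting pair $(z,x)$ and again to $(z,x')$. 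Each application inserts a square-central--Artin--Schreier--square-central path ending at $z$: from $(z,x)$ I obtain a square-central $z_1$ and Artin--Schreier $w_1$ linking $x$ to $z$ through $x,z_1,w_1,z$, and from $(z,x')$ a square-central $z_2$ and Artin--Schreier $w_2$ linking $z$ to $x'$ through $z,w_2,z_2,x'$. Splicing at $z$ gives the longer chain $x,z_1,w_1,z,w_2,z_2,x'$, which is the second option with $y_1=z_1$, $x_1=w_1$, $y_2=z$, $x_3=w_2$, $y_3=z_2$.

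The main obstacle I anticipate is not the splicing but the bookkeeping of field conditions and degenerate positions. The corollaries (through their reliance on \Tref{decompose}) really require the two input elements to generate distinct quadratic subfields, so one must verify that the hub $z$ satisfies $F[x]\neq F[z]$ and $F[z]\neq F[x']$. The observation that makes this automatic is that when $x$ and $x'$ do not commute, any $z$ commuting with both lies outside $F[x]$ and outside $F[x']$ (otherwise $z=a+bx$ with $b\neq 0$ would force $x$ and $x'$ to commute). The remaining degenerate cases --- when $x$ and $x'$ already commute, or generate the same quadratic field --- I would dispatch separately, either by choosing the hub $z$ with $F[z]$ distinct from $F[x]=F[x']$ (possible since $\deg A\geq 4$) or by padding a shorter chain, using that $x$ always anti-commutes (respectively stands in the Artin--Schreier relation) with a suitable square-central element inside a single quaternion factor.
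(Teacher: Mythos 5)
Your proposal is correct and takes essentially the same route as the paper: the paper's proof likewise produces a ``hub'' element commuting with both $x$ and $x'$ via Proposition~\ref{charnot2commuting} and then splices two half-chains built from Corollaries~\ref{charnot2anti} and~\ref{char2anti}. In fact your write-up is more careful than the paper's, which disposes of characteristic $2$ with ``the proof is similar'' and never addresses the degenerate positions (e.g.\ $F[x]=F[x']$ or commuting endpoints) that your last paragraph handles.
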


\begin{proof}
Assume $\operatorname{char}(F) \neq 2$.
According to Lemma \ref{charnot2commuting}, there exists a square-central element $x_2$ which commutes with them both.
According to Corollary \ref{charnot2anti}, there exists a square-central element $x_1$ commuting with $x$ and $x_2$, and a square-central element $x_3$ commuting with $x_2$ and $x'$.

The proof in characteristic 2 is similar, making use of Corollaries \ref{charnot2anti} and \ref{char2anti}.
\end{proof}

We say that two quaternion algebras $Q$ and $Q'$ over $F$ share a common slot if for some $a,b,c \in F$, either $Q=[a,b)$ and $Q'=[a,c)$ or $Q=[a,b)$ and $Q'=[c,b)$ if $\operatorname{char}(F)=2$, or $Q=(a,b)$ and $Q'=(a,c)$ if $\operatorname{char}(F) \neq 2$.
We say that two tensor products of quaternion algebras $\otimes_{i=1}^n Q_i$ and $\otimes_{i=1}^n Q'_i$ share a common slot if there exist $i$ and $j$ such that $Q_i$ and $Q'_j$ share a common slot.

\begin{thm}
\sloppy Let $F$ be a 2-field with $\operatorname{cd}_2(F) \leq 2$. For every two isomorphic tensor products of quaternion algebras over $F$, $\otimes_{i=1}^n Q_i$ and $\otimes_{i=1}^n Q_i'$, there exists a chain $\otimes_{i=1}^n Q_i,\otimes_{i=1}^n Q_i'',\otimes_{i=1}^n Q_i''',\otimes_{i=1}^n Q_i'$ such that every two adjacent tensor products share a common slot.
\end{thm}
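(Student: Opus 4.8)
The plan is to reduce the statement to the following elementary observation: if an element $s$ occurs as a slot of some quaternion factor of one tensor decomposition of $A$ and also as a slot of the \emph{same type} of some factor of another decomposition, then these two tensor products share a common slot. Indeed, if $s$ is square-central and is a slot of a factor $R$ of the first product and of a factor $R'$ of the second, then $R=(s^2,\ast)$ and $R'=(s^2,\ast)$, so $R$ and $R'$ share the slot $s^2$; in $\operatorname{char}(F)=2$ the Artin-Schreier case is identical with $s^2+s$ in place of $s^2$, and the square-central case is identical. Thus it suffices to manufacture two intermediate decompositions of $A$ whose factors contain prescribed common elements, and the whole problem becomes one about single elements rather than whole tensor products.

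First I would fix a slot $x$ of $Q_1$ and a slot $x'$ of $Q_1'$, taking both Artin-Schreier when $\operatorname{char}(F)=2$. By \Pref{charnot2commuting} there is an element $z$ commuting with both $x$ and $x'$, square-central if $\operatorname{char}(F)\neq 2$ and either Artin-Schreier or square-central if $\operatorname{char}(F)=2$. Assuming for the moment that $F[x],F[x']\neq F[z]$, I would apply \Tref{decompose} to the commuting pair $\{x,z\}$ to obtain a decomposition $T_1=P_1\otimes\cdots\otimes P_n$ with $x\in P_1$ and $z\in P_2$, and to the pair $\{x',z\}$ to obtain $T_2=P_1'\otimes\cdots\otimes P_n'$ with $x'\in P_1'$ and $z\in P_2'$. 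In each factor the inserted element is automatically a slot, since it is noncentral and square-central or Artin-Schreier. Writing $T_0=\otimes_{i=1}^n Q_i$ and $T_3=\otimes_{i=1}^n Q_i'$, the chain $T_0,T_1,T_2,T_3$ is the desired one: $T_0$ and $T_1$ share the slot determined by $x$, $T_1$ and $T_2$ share the slot determined by $z$, and $T_2$ and $T_3$ share the slot determined by $x'$, by the observation above. This already produces a chain of length $3$.

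It then remains to handle the degenerate coincidences and, in characteristic $2$, the bookkeeping of slot types. When $z$ is square-central and $x$ is Artin-Schreier, the quadratic extensions $F[z]$ and $F[x]$ are of different separability types and hence automatically distinct, so \Tref{decompose} applies; here the hypotheses of that theorem force the Artin-Schreier element into the second slot, so I would instead place $z$ in the first factor and $x$ (resp. $x'$) in the second, which is permitted since \Tref{decompose} allows the first element to be of either type. If instead $F[z]=F[x]$ (which can only happen when $z$ and $x$ are of the same type), then $z$ equals $x$ up to an additive or multiplicative scalar, so $x$ itself commutes with $x'$; one then applies \Tref{decompose} directly to $\{x,x'\}$ when $F[x]\neq F[x']$ to get a chain of length $2$, or notes that $Q_1$ and $Q_1'$ already share the slot of $x$ when $F[x]=F[x']$. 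Any chain shorter than length $3$ is padded to exactly length $3$ by repeating an endpoint, since a tensor product trivially shares a common slot with itself.

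I expect the main obstacle to be the characteristic $2$ case: one must keep careful track of whether the element $z$ furnished by \Pref{charnot2commuting} is Artin-Schreier or square-central and arrange the two applications of \Tref{decompose} so that the shared element sits in a factor as a slot of the matching type. The two facts that make this go through are that \Tref{decompose} permits a square-central first slot together with an Artin-Schreier second slot, and that a square-central and an Artin-Schreier element always generate distinct quadratic subfields. The characteristic $\neq 2$ argument is then the formal shadow of this one, with every element square-central and every coincidence $F[x]=F[z]$ forcing $z$ to be a scalar multiple of $x$.
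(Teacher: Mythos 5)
Your argument for the division-algebra case is essentially the paper's own proof: fix slots $x$ of $Q_1$ and $x'$ of $Q_1'$, produce a commuting element $z$ via \Pref{charnot2commuting}, and apply \Tref{decompose} twice to build the two intermediate decompositions. In fact you are more careful than the paper about the degenerate coincidences (the case $F[z]=F[x]$, and the placement of the Artin-Schreier element in the second slot when $z$ is square-central in characteristic $2$), and those checks are correct.

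However, there is a genuine gap: you never treat the case where $\otimes_{i=1}^n Q_i$ is \emph{not} a division algebra, and the theorem makes no division hypothesis. Both of your tools are unavailable there --- \Pref{charnot2commuting} is stated only for division algebras of exponent $2$, and \Tref{decompose} likewise requires $A$ to be a division algebra (its proof runs through the anisotropy of the form $f$ with $E(f)\cong A$, which fails precisely when $A$ is not division). So for, say, $Q\otimes Q \cong M_4(F)$, or any product with index smaller than $2^n$, your construction of $z$ and of the two intermediate decompositions simply does not get off the ground, and no amount of padding repairs this, since padding presupposes a chain already exists. The paper disposes of this case by a separate, elementary construction: if the product is not division, it can be rewritten (using \Tref{Merkurjev}) as $M_2(F)\otimes Q_2''\otimes\dots\otimes Q_n''$, and then one exploits the flexibility of the split factor, namely $M_2(F)\cong [a,1)\cong [a',1)$ (resp. $(a,1)\cong(a',1)$) for the first slots $a$ of $Q_1=[a,b)$ and $a'$ of $Q_1'=[a',b')$; the chain
$\otimes_{i=1}^n Q_i,\ [a,1)\otimes Q_2''\otimes\dots\otimes Q_n'',\ [a',1)\otimes Q_2''\otimes\dots\otimes Q_n'',\ \otimes_{i=1}^n Q_i'$
then works, the middle pair sharing the slot $1$. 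You need to add an argument of this kind (or some other reduction) to cover the non-division case before your proof is complete.
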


\begin{proof}
If $\otimes_{i=1}^n Q_i$ is not a division algebra then it is isomorphic to $M_2(F) \otimes Q_2'' \otimes \dots \otimes Q_n''$ for some quaternion algebras $Q_i''$.
If $\operatorname{char}(F)=2$, write $Q_1=[a,b)$ and $Q_1'=[a',b')$ for some $a,a' \in F$ and $b,b' \in F^\times$.
Then $M_2(F)$ is isomorphic to both $[a,1)$ and $[a',1)$.
If $\operatorname{char}(F)\neq 2$, write $Q_1=(a,b)$ and $Q_1'=(a',b')$ for some $a,a',b,b' \in F^\times$.
Then $M_2(F)$ is isomorphic to both $(a,1)$ and $(a',1)$.
The required chain is obtained as a result.

Assume $\otimes_{i=1}^n Q_i$ is a division algebra. Write $Q_1=F[x,y : x^2+x=a, y^2=b, x y+y x=y]$ and $Q_2=F[x',y' : x'^2+x'=a', y'^2=b', x' y'+y' x'=y']$ if $\operatorname{char}(F)=2$, and $Q_1=F[x,y : x^2=a, y^2=b, x y=-y x]$ and $Q_1'=F[x',y' : x'^2=a, y'^2=b, x' y'=-y' x']$ if $\operatorname{char}(F)\neq 2$.
If $\operatorname{char}(F)=2$ then there exists an element $z$, either square-central or Artin-Schreier, commuting with $x$ and $x'$.
If $\operatorname{char}(F)\neq 2$ there exists a square-central element $z$ commuting with $x$ and $x'$.
Therefore, according to Theorem \ref{decompose}, $A$ is isomorphic to $\otimes_{i=1}^n Q_i''$ where $x \in Q_1''$ and $z \in Q_2''$, and $A$ is also isomorphic to $\otimes_{i=1}^n Q_i'''$ where $x' \in Q_1'''$ and $z \in Q_2'''$.
The required chain is obtained as a result.
\end{proof}

\section*{Acknowledgements}
We thank Jean-Pierre Tignol and the anonymous referee whose remarks improved the quality of the paper considerably.

\section*{Bibliography}
\bibliographystyle{amsalpha}
\bibliography{bibfile}
\end{document}